\documentclass[11pt]{article}
\usepackage[margin=1.1in]{geometry}
\usepackage{amsmath,amssymb,amsthm}
\usepackage{booktabs}
\usepackage{hyperref}
\usepackage{microtype}
\usepackage{tikz}

\newtheorem{theorem}{Theorem}[section]
\newtheorem{lemma}[theorem]{Lemma}
\newtheorem{proposition}[theorem]{Proposition}
\newtheorem{corollary}[theorem]{Corollary}
\theoremstyle{remark}
\newtheorem{remark}[theorem]{Remark}
\theoremstyle{definition}
\newtheorem{definition}[theorem]{Definition}

\newcommand{\Cyc}{\mathcal{C}}
\newcommand{\TC}{\mathrm{TC}}

\title{Small graphs without power-of-two cycles:\\
a lower bound of 24, a correction to a construction of Exoo,\\
and explicit bounds for $f(k)$}
\author{Daniel Garcia\thanks{Independent researcher. Computations were carried out with the assistance of an AI system (Claude); all results were independently re-verified as described in Section~\ref{sec:data}.}}
\date{September 2026}

\begin{document}
\maketitle

\begin{abstract}
The Erd\H{o}s--Gy\'arf\'as conjecture states that every graph with minimum degree at least~3 contains a cycle whose length is a power of two. We prove by a SAT-based exhaustive search that every graph with minimum degree at least~3 on at most 23 vertices contains a cycle of length 4 or a cycle of length 8; consequently any counterexample has at least 24 vertices, improving the previously published bound of 16, and the smallest graph of minimum degree~3 with no 4-cycle and no 8-cycle has exactly 24 vertices. We show that the lemma underlying Exoo's 450-vertex cubic graph with no cycles of length $4,8,16,32$ is false---the Tutte--Coxeter graph contains 8-cycles alternating between outer and chord edges---so that the graph as specified contains 32-cycles; we repair the construction and verify the corrected graph, so that $f(5)\le 450$ stands. We introduce an exact ``window calculus'' for vertex-replacement constructions and use it to prove $f(k)\le 15\,n_3(2^{k-2}+1)$ for all $k\ge 4$, where $n_3(g)$ is the order of the smallest known cubic graph of girth $g$; in particular $f(6)\le 32\,640$, the first bound for $f(6)$. The same calculus shows that Exoo's 78-vertex witness for $f(4)\le 78$ is optimal among all gadget designs on bases with at most 12 vertices, and yields a counting obstruction that rules out bases of girth at most 13 for the $H_{15}$ construction. All graphs are provided, and every level of the exhaustive search is certified by a DRAT proof checked with \texttt{drat-trim}.
\end{abstract}

\section{Introduction}

For a graph $G$ let $\Cyc(G)$ denote its set of cycle lengths. Erd\H{o}s and Gy\'arf\'as conjectured in 1995 that $\Cyc(G)\cap\{4,8,16,\dots\}\neq\emptyset$ for every graph $G$ of minimum degree at least~3 \cite{Erdos97}. The conjecture is open; it is listed as Problem~64 on \emph{erdosproblems.com}. Liu and Montgomery \cite{LiuMontgomery} proved that a power-of-two cycle is forced once the average degree exceeds an absolute (uncomputed) constant, which refutes the stronger belief of Erd\H{o}s and Gy\'arf\'as that no minimum degree suffices. Below that constant, including the cubic case, nothing is known beyond special graph classes and finite searches.

Following Exoo \cite{Exoo}, let $f(k)$ denote the order of a smallest cubic graph with no cycle of length $2^m$ for any $m\le k$. Then $f(2)=10$ (the Petersen graph) and $f(3)=24$ (Markstr\"om \cite{Markstrom}); Exoo gave $54\le f(4)\le 78$, the lower bound being an unpublished computation of Markstr\"om, and $f(5)\le 450$. We note the elementary reformulation
\begin{equation}\label{eq:reform}
\text{the conjecture is false} \iff f(k)\le 2^k \text{ for some } k,
\end{equation}
since a graph on at most $2^k$ vertices avoiding all $2^m$-cycles with $m\le k$ avoids every power of two it could contain.

On the side of small counterexamples, the published state of the art is that any counterexample has at least 16 vertices, obtained by Royle's search through 15 vertices reported in \cite{Markstrom}; the figure ``17'' that circulates in secondary sources is not supported by any primary source we could locate. For cubic graphs Markstr\"om showed that all cubic graphs on at most 28 vertices contain a 4-, 8- or 16-cycle. Recent certified searches in this genre include Tranquilli's result that every cubic bipartite graph on at most 58 vertices contains a cycle of length 4, 8 or 16 \cite{Tranquilli}, and Carr's structural results on minimal counterexamples \cite{Carr}.

\paragraph{Results.}
\begin{itemize}
\item \textbf{Theorem~\ref{thm:A}.} Every graph with minimum degree at least 3 on at most 23 vertices contains a cycle of length 4 or a cycle of length 8. Hence any counterexample to the Erd\H{o}s--Gy\'arf\'as conjecture has at least 24 vertices, and the smallest graph of minimum degree~3 with no 4-cycle and no 8-cycle has exactly 24 vertices.
\item \textbf{Proposition~\ref{prop:exoo}.} The Tutte--Coxeter graph contains 8-cycles with no two consecutive outer edges, contradicting the lemma used in \cite{Exoo}; the 450-vertex graph of \cite{Exoo}, as specified, contains a 32-cycle. \textbf{Theorem~\ref{thm:450}.} A corrected orientation yields a cubic graph on 450 vertices with no cycle of length $4,8,16$ or $32$, so $f(5)\le 450$.
\item \textbf{Theorem~\ref{thm:B}.} $f(k)\le 15\,n_3(2^{k-2}+1)$ for all $k\ge 4$; in particular $f(6)\le 32\,640$.
\item \textbf{Proposition~\ref{prop:78}.} Over all $C_4$-free cubic base graphs on at most 12 vertices and the gadget library $\{$vertex, triangle, $H_7$, $H_7'\}$, the minimum order of a $\{4,8,16\}$-free expansion is 78, attained only on Exoo's base.
\item \textbf{Lemma~\ref{lem:count}.} A counting obstruction for $H_{15}$-expansions, which rules out all bases of girth at most 13.
\end{itemize}

\section{The lower bound}

\begin{theorem}\label{thm:A}
Every graph with minimum degree at least $3$ on at most $23$ vertices contains a cycle of length $4$ or a cycle of length $8$.
\end{theorem}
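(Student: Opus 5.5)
The plan is a computer-assisted exhaustive search, organised so that each level can be certified. The first step is a reduction to \emph{minimal} objects. Suppose $G$ has minimum degree at least $3$, at most $23$ vertices, and no $4$-cycle and no $8$-cycle. Deleting an edge cannot create cycles, so we may delete edges $uv$ with $\deg u,\deg v\ge 4$ for as long as such edges exist; the result is still a counterexample. We may also pass to a connected component. So it suffices to rule out connected $\{C_4,C_8\}$-free graphs $G$ on $n\le 23$ vertices with $\delta(G)\ge 3$ in which every edge has an endpoint of degree exactly $3$; in particular the vertices of degree at least $4$ form an independent set. Since $G$ is $C_4$-free, any two vertices have at most one common neighbour. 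Counting paths of length two gives
\begin{equation*}
\sum_v \binom{\deg v}{2}\le \binom{n}{2},
\end{equation*}
which bounds the degree sequences to be considered. It also shows that the neighbourhood of any vertex together with its second neighbourhood contains at least $1+d+2d$ vertices.

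Next I would encode each case as a SAT instance. Fix $n$ and introduce a Boolean variable $x_{ij}$ for each pair of vertices. Minimum degree at least $3$ becomes an at-least-$3$ cardinality constraint at every vertex. $C_4$-freeness is one clause $\neg(x_{ab}\wedge x_{bc}\wedge x_{cd}\wedge x_{da})$ for each $4$-cycle on the labelled vertex set, which gives $O(n^4)$ clauses. Forbidding $C_8$ naively would need $O(n^8)$ clauses, which is far too many. I would therefore either add $C_8$ clauses lazily in a CEGAR loop (solve, find an $8$-cycle in the model, add the blocking clause, repeat), or introduce auxiliary variables $p_{a,c}$ meaning ``$a$ and $c$ are joined by a path of length two through some vertex'' and forbid two internally disjoint paths of length four between any pair.

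The lazy loop is simpler and has a useful property: at termination the final clause set is an ordinary CNF whose unsatisfiability can be certified by a DRAT proof. This is valid because every clause in the set is sound, since each forbids a configuration that really would be a $4$-cycle or an $8$-cycle. Symmetry breaking is essential. I would fix vertex $0$ to have minimum degree and its neighbours to be $1,2,3$ (no edges among them except that triangles are allowed). Then I would add lex-leader constraints on the adjacency rows, or a BFS-canonical labelling from vertex $0$. The degree-$\ge 4$ independence property from the reduction above gives further clauses.

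For the upper end, to show that $24$ is attained I would exhibit one explicit $24$-vertex graph of minimum degree $3$ with no $4$- or $8$-cycle, for example Markstr\"om's cubic graph realising $f(3)=24$, and verify it directly. Each $n\le 23$ is then a separate unsatisfiable instance, and each is checked by \texttt{drat-trim}.

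The main obstacle I expect is the largest cases, $n=22,23$. There the $C_8$ constraint and the residual symmetry make the solver slow, and the DRAT proofs become huge. My first attempt would be to split the instance by cube-and-conquer on the degree sequence and on the local structure around vertex $0$: whether it lies on a triangle, and the pattern of the second neighbourhood, where girth-type reasoning constrains edges. Each cube would then be proved separately, and I would certify that the cubes cover the whole instance by a tautology check.
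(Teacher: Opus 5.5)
Your plan is essentially the paper's proof: edge variables with at-least-$3$ cardinality constraints, explicit $C_4$ clauses, lexicographic row symmetry breaking, lazy CEGAR blocking of $8$-cycles, and a DRAT certificate checked by \texttt{drat-trim} for the final static CNF at each $n\le 23$. The paper does without the minimal-counterexample reductions, the vertex-$0$ normalisation and cube-and-conquer, since $n=23$ was solved directly in about $5\,850$\,s.

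Two small cautions. First, the symmetry-breaking and reduction clauses are not justified because they forbid a $4$- or $8$-cycle. They are justified because every isomorphism class of (minimal) counterexamples has a labelling that satisfies all of them at once, and you must check this for your particular combination of schemes. For instance, the globally lex-maximal labelling puts a vertex of maximum degree at position $0$, so it cannot be combined naively with fixing vertex $0$ of minimum degree. Second, your remark that $N[v]\cup N_2(v)$ has at least $1+3d$ vertices is false when $v$ lies on a triangle, because $C_4$-freeness only forces $N(v)$ to induce a matching; it must not be used for pruning.
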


\begin{corollary}
Every counterexample to the Erd\H{o}s--Gy\'arf\'as conjecture has at least $24$ vertices. The smallest graph with minimum degree at least $3$ containing neither a $4$-cycle nor an $8$-cycle has exactly $24$ vertices.
\end{corollary}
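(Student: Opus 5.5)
The plan is a computer-assisted exhaustive search, organised so that each step is certified. We first reduce to a canonical class. Suppose $G$ is a counterexample with $\delta(G)\ge 3$ on $n\le 23$ vertices and no $C_4$ or $C_8$. Deleting any edge whose endpoints both have degree at least $4$ keeps $\delta\ge3$ and creates no cycles. So we may assume $G$ is \emph{edge-minimal}: every edge has an endpoint of degree exactly $3$. Adding isolated-free padding is unnecessary, since we search each order $n$ separately. By Markstr\"om's result $f(3)=24$, the cubic case is already excluded, but the search handles all degree sequences uniformly. We treat $n=4,\dots,23$ in turn. For small $n$ we could alternatively use a geng-style enumeration of $C_4$-free graphs with $\delta\ge3$, as a cross-check against Royle's bound of $16$.

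For each $n$ we build a CNF encoding. There is one variable $x_{uv}$ per pair. Degree constraints $\sum_v x_{uv}\ge 3$ are imposed through cardinality encodings (sequential counters). Edge-minimality is enforced by clauses that force an endpoint of degree $\le3$. The $C_4$ exclusion is direct: for every $4$-set and each of its three cyclic orderings, a clause forbids all four edges. $C_8$ cannot be excluded by naive enumeration, since there are $\binom{23}{8}\cdot 2520$ clauses. We therefore exclude it lazily, CEGAR style. We solve, extract a model, search it for an $8$-cycle, add the blocking clause for the found cycle (the negation of its $8$ edges), and repeat until UNSAT.

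An alternative we would also try is to add auxiliary path variables $p^{(\ell)}_{uv}$ for "there is a $u$--$v$ path of length $\ell$". These are cheap to encode for short paths. Two internally disjoint paths of length $4$ between $u$ and $v$ give an $8$-cycle, and a $C_4$-free constraint simplifies their interaction.

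Symmetry breaking is essential. We impose lexicographic-leader constraints on the adjacency matrix rows, as in Codish et al., or fix a BFS structure around a vertex of maximum degree. Because girth-type constraints hold, the neighbourhood of vertex $0$ and its second neighbourhood form a tree without $C_4$. This prefix can be fixed outright, which cuts the search space drastically.

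Finally, each level's UNSAT result is certified with a DRAT proof checked by \texttt{drat-trim*}. For the lazy scheme, the final formula includes all learned $C_8$ blocking clauses. Each of these is individually valid because it forbids a genuine $8$-cycle, so the certificate is sound. For the corollary, we exhibit a $24$-vertex graph found at the next level, for instance via the known cubic $\{4,8\}$-free graph of Markstr\"om. This shows $24$ is attained. Any counterexample with fewer vertices would contain a $C_4$ or $C_8$, which is a contradiction.

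The main obstacle is tractability at $n=22,23$. The lazy loop may need very many iterations, and symmetry breaking must be strong enough without destroying soundness. I would first attempt the fixed BFS-tree prefix combined with incremental solving (CaDiCaL), splitting into cubes on the degree sequence.
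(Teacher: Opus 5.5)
Your deduction of the corollary is the paper's. A counterexample has neither a $4$-cycle nor an $8$-cycle, so the exhaustive search of Theorem~\ref{thm:A} forces at least $24$ vertices. Markstr\"om's cubic $24$-vertex graphs without $4$- and $8$-cycles show that $24$ is attained. Your outline of the search also matches the paper: sequential counters, one clause per $4$-set and cyclic order, lazy blocking of $8$-cycles, lex-leader rows, and a DRAT proof for the static formula with all blocking clauses. Your extra edge-minimality reduction is sound. It is the reduction ``vertices of degree at least $4$ form an independent set'' from the paper's remark, and it is compatible with lex-leader because it is isomorphism-invariant.

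The step that fails is the one you say you would try first: hard-coding a tree-shaped BFS prefix at vertex $0$. Only $4$- and $8$-cycles are forbidden, so triangles and $5$-cycles through $0$ are allowed, and the ball of radius $2$ need not induce a tree. $C_4$-freeness gives only two things: $N(0)$ induces a matching, and two neighbours of $0$ have no common neighbour besides $0$. The truncated Petersen graph has every vertex on a triangle and cycle spectrum $\{3\}\cup[10,27]$, so triangles everywhere are compatible with the constraints. Your cube-splitting on degrees handles the fact that the degrees of $0$ and its neighbours vary, but not the triangles.

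With such a prefix fixed, UNSAT only shows that no graph of the class has a vertex with that local structure, not that the class is empty. \texttt{drat-trim} cannot catch this. It certifies that the formula you hand it is unsatisfiable, not that every graph of the class has a labelling satisfying the formula. A prefix is usable only after case-splitting on the triangles at $0$. It also must not be combined with lex-leader rows unless you prove that every graph has a labelling satisfying both schemes at once.

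Similarly, your path-variable alternative is sound only if it forbids \emph{internally disjoint} pairs of length-$4$ $u$--$v$ paths. Two such paths sharing their first vertex close only a $6$-cycle, which is permitted, and plain path-existence variables do not express disjointness. With lex-leader alone, as in the paper, the argument goes through.
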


\begin{proof}[Proof of the corollary]
A counterexample has no 4-cycle and no 8-cycle, so by the theorem it has at least 24 vertices. Markstr\"om's four cubic graphs on 24 vertices \cite{Markstrom} have no 4- or 8-cycle, giving the upper bound.
\end{proof}

\subsection{Method}

For each $n$ we decide by SAT whether a graph on the vertex set $\{0,\dots,n-1\}$ with minimum degree at least 3 and no cycle of length 4 or 8 exists. The encoding uses one Boolean variable $e_{uv}$ per pair $u<v$ and the following clauses:
\begin{enumerate}
\item for every vertex $v$, a cardinality constraint $\sum_{u\ne v} e_{uv}\ge 3$ (sequential-counter encoding);
\item for every 4-subset $\{a,b,c,d\}$ and each of its three cyclic pairings, the clause $\neg e_{pq}\vee\neg e_{qr}\vee\neg e_{rs}\vee\neg e_{sp}$ forbidding that 4-cycle;
\item symmetry breaking: for each $i<n-1$, the row of $i$ is lexicographically at least the row of $i+1$ on the columns other than $i,i+1$, encoded with auxiliary variables $d_{i,t}\to(e_{i,k_t}\wedge\neg e_{i+1,k_t})$ and clauses $d_{i,0}\vee\dots\vee d_{i,t-1}\vee e_{i,k_t}\vee\neg e_{i+1,k_t}$;
\item 8-cycles are excluded lazily: whenever the solver returns a model, up to 400 of its 8-cycles are enumerated and each is blocked by the clause $\bigvee_{j}\neg e_{x_j x_{j+1}}$ over its eight edges; the solver is then resumed incrementally.
\end{enumerate}
The loop terminates either with a model having no 8-cycle (a witness, which is re-verified independently) or with UNSAT.

\emph{Soundness of UNSAT.} Every graph in the target class satisfies every clause: (1) and (2) by definition; a blocking clause in (4) states that eight specific edges forming an 8-cycle are not all present, which any 8-cycle-free graph satisfies; and the lex-maximal labelling of any graph satisfies (3), so each isomorphism class retains a satisfying labelling. Hence UNSAT proves the class empty. The solver used is CaDiCaL~1.9.5 through PySAT. The pipeline was validated by reproducing the known answers for $n\le 15$ \cite{Markstrom}, and every witness produced in other runs (Section~\ref{sec:closed}) was verified by an independent cycle-enumeration routine that was itself cross-checked against brute-force enumeration on graphs with known cycle spectra.

\begin{table}[h]
\centering\small
\begin{tabular}{rrrr}
\toprule
$n$ & CEGAR rounds & 8-cycles blocked & time (s) \\
\midrule
$\le 9$ & 1 & 0 & $<0.1$ \\
10 & 8 & 94 & $<0.1$ \\
11 & 19 & 305 & $<0.1$ \\
12 & 44 & 920 & 0.1 \\
13 & 96 & 2\,594 & 0.2 \\
14 & 179 & 5\,825 & 1.6 \\
15 & 338 & 12\,155 & 3.8 \\
16 & 612 & 23\,143 & 4.1 \\
17 & 1\,057 & 43\,760 & 10.3 \\
18 & 1\,765 & 74\,479 & 30.4 \\
19 & 2\,852 & 121\,319 & 83.7 \\
20 & 4\,662 & 198\,519 & 171 \\
21 & 7\,567 & 317\,103 & 584 \\
22 & 12\,508 & 496\,446 & 2\,085 \\
23 & 15\,029 & 770\,552 & $\approx 5\,850$ \\
\bottomrule
\end{tabular}
\caption{The search of Theorem~\ref{thm:A}: all levels UNSAT. Single core of a consumer desktop. For $n\le 9$ the instance is unsatisfiable before any 8-cycle is blocked: no $C_4$-free graph of minimum degree 3 exists on fewer than 10 vertices.}
\end{table}

\begin{remark}
The theorem is stronger than what the conjecture requires (no condition on 16-cycles is used). The search assumes no minimal-counterexample reductions; the reductions of \cite{Markstrom,Carr} (vertices of degree at least 4 form an independent set; every vertex is adjacent to a vertex of degree 3) are sound for extending the ladder and are the natural lever for pushing past 24, where the cost, which grows by a factor of roughly 3--4 per vertex, becomes prohibitive for the full $\{4,8,16\}$ decision.
\end{remark}

\section{The window calculus}\label{sec:window}

All known small graphs without short power-of-two cycles are obtained by replacing the vertices of a base graph by gadgets. We make the bookkeeping exact.

\begin{definition}
A \emph{vertex gadget} is a graph $H$ with three distinguished \emph{attachment} vertices of degree 2, all other vertices of degree 3. Replacing a vertex $x$ of a cubic graph $B$ by $H$ means deleting $x$ and joining its three former neighbours to the three attachments by a bijection. For attachments $p,q$ let $S_H(p,q)$ be the set of lengths of simple $p$--$q$ paths in $H$.
\end{definition}

\begin{lemma}[Window lemma]\label{lem:window}
Let $B$ be cubic and let $G$ be obtained by replacing every vertex $x$ of $B$ by a gadget $H_x$. Then a cycle of $G$ either lies inside a single gadget, or projects onto a cycle $x_1\cdots x_\ell$ of $B$ and has length $\ell+\sum_{i=1}^{\ell}s_i$ with $s_i\in S_{H_{x_i}}(p_i,q_i)$, where $p_i,q_i$ are the attachments facing $x_{i-1}$ and $x_{i+1}$. Conversely, every such sum is the length of a cycle of $G$. In particular $G$ has a cycle of length $T$ if and only if some gadget has an internal cycle of length $T$ or $T-\ell$ lies in the Minkowski sum $\sum_i S_{H_{x_i}}(p_i,q_i)$ for some cycle of $B$ of length $\ell$.
\end{lemma}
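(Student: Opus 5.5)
I would prove the lemma by analysing how a cycle $C$ of $G$ meets the edges of $G$ coming from $B$. Call these the \emph{base edges}: each edge $xy$ of $B$ becomes exactly one edge of $G$, joining an attachment of $H_x$ to an attachment of $H_y$. Every other edge of $G$ lies inside a single gadget. If $C$ uses no base edge, it is a cycle of one gadget $H_x$, which is the first alternative.

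Otherwise, list the base edges of $C$ in cyclic order as $f_1,\dots,f_\ell$, and consider the segments of $C$ between consecutive base edges. Each segment is a walk using only internal edges, so it stays inside one gadget $H_{x_i}$. It begins at the attachment where $f_{i-1}$ enters and ends at the attachment where $f_i$ leaves. Each attachment has degree $2$ in $H$, so it has exactly one base edge and degree $3$ in $G$.

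The two endpoints of a segment must be distinct attachments. If they coincided, that vertex would be incident in $C$ to two base edges; but it has only one base edge, so $f_{i-1}=f_i$ would be the same edge. This is only possible when $\ell=1$, and then $C$ would traverse that edge twice, which is impossible. A segment of length $0$ (entering and leaving at the same vertex) would need two base edges at one vertex, which is excluded for the same reason. Hence each segment is a simple $p_i$--$q_i$ path in $H_{x_i}$ with $p_i\neq q_i$, and its length lies in $S_{H_{x_i}}(p_i,q_i)$. Here $p_i,q_i$ are the attachments joined to the gadgets of $x_{i-1}$ and $x_{i+1}$.

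Next I check that $x_1\cdots x_\ell$ is a cycle of $B$. Consecutive $x_i$ are adjacent in $B$ via $f_i$. The $x_i$ are distinct, because $H_x$ has only three attachments: a second visit to $H_x$ would require four attachment endpoints, or would reuse a vertex. Also $\ell\ge3$, since $B$ is simple (cubic, and implicitly simple). Counting the $\ell$ base edges plus the segment lengths gives $|C|=\ell+\sum_i s_i$.

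For the converse, take a cycle $x_1\cdots x_\ell$ of $B$ and any choice of simple paths $P_i$ in $H_{x_i}$ between the attachments facing $x_{i-1}$ and $x_{i+1}$. Concatenate the $P_i$ with the base edges $x_ix_{i+1}$. Distinct gadgets are vertex-disjoint and the $x_i$ are distinct, so the result is a simple cycle of length $\ell+\sum_i|P_i|$. The ``in particular'' clause then follows by combining both directions.

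The main obstacle is the careful justification that a cycle cannot re-enter a gadget, and that segments are nondegenerate with distinct endpoints. The degree-$2$ attachment condition is exactly what is needed for this; it is also where I would state that $B$ is assumed simple.
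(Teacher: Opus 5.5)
Your proof is correct and follows essentially the same route as the paper. You split the cycle at the inter-gadget edges and use that each gadget meets the rest of $G$ in only three edges, one per attachment, to show that each gadget is traversed at most once, along a simple path between two distinct attachments. You then project onto a cycle of $B$ and obtain the converse by gluing paths in the vertex-disjoint gadgets.

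The only difference is presentational. The paper compresses the ``at most one visit'' step into the observation that a cycle uses zero or two edges of a three-edge cut. You instead count attachment endpoints, and you also treat the degenerate cases ($\ell\le 2$, zero-length segments) explicitly.
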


\begin{proof}
A gadget copy is joined to the rest of $G$ by exactly three edges, so a cycle not contained in it uses either zero or two of them, i.e.\ it visits the copy at most once, along a simple path between two attachments. Contracting each copy maps the cycle onto a closed walk of $B$ in which no vertex repeats, hence a cycle; the length is as stated. Conversely, given a cycle of $B$ and a choice of internal path for each visited copy, the union is a cycle of $G$, and the choices are independent because the copies are disjoint.
\end{proof}

The gadgets used by Exoo are the following. $H_7$ has vertices $u,v,w$ (attachments) and $a,b,c,d$ and edges $va,ab,bw,cv,wd,ac,cu,ud,db$. $H_{15}$ consists of two copies $A,B$ of $H_7$ and a vertex $z$ with edges $v_Av_B$, $w_Az$, $zw_B$; its attachments are $u=z$, $v=u_B$, $w=u_A$. By exhaustive enumeration,
\[
S_{H_7}(u,v)=S_{H_7}(u,w)=\{2,\dots,6\},\quad S_{H_7}(v,w)=\{3,\dots,6\},\quad \Cyc(H_7)=\{3,5,6,7\},
\]
\[
S_{H_{15}}(u,v)=S_{H_{15}}(u,w)=\{3,\dots,14\},\qquad S_{H_{15}}(v,w)=\{5,\dots,14\},
\]
\[
\Cyc(H_{15})=\{3,5,6,7,9,\dots,15\}.
\]
\begin{figure}[ht]
\centering
\begin{tikzpicture}[scale=1.6,every node/.style={circle,draw,inner sep=1.6pt,fill=white}]
\coordinate (v) at (0,0); \coordinate (a) at (1,0); \coordinate (b) at (2,0); \coordinate (w) at (3,0);
\coordinate (c) at (0.691,0.951); \coordinate (u) at (1.5,1.66); \coordinate (d) at (2.309,0.951);
\draw (v)--(a)--(b)--(w) (c)--(v) (w)--(d) (a)--(c)--(u)--(d)--(b);
\node[fill=black!15] at (v) {$v$}; \node[fill=black!15] at (w) {$w$}; \node[fill=black!15] at (u) {$u$};
\node at (a) {$a$}; \node at (b) {$b$}; \node at (c) {$c$}; \node at (d) {$d$};
\end{tikzpicture}
\caption{The gadget $H_7$; the shaded vertices are the attachments. Simple $u$--$v$ and $u$--$w$ paths have lengths $2,\dots,6$; $v$--$w$ paths have lengths $3,\dots,6$; the internal cycles have lengths $3,5,6,7$.}
\label{fig:h7}
\end{figure}

Both spectra avoid powers of two, and all path sets are intervals, so the Minkowski sums are intervals: a base cycle of length $\ell$ in which $a$ of the visits are through the attachment $u$ (``$u$-type'') contributes exactly the lengths $[\ell+\Sigma_{\min},\,\ell+\Sigma_{\max}]$ with, for $H_{15}$, $\Sigma_{\min}=3a+5(\ell-a)=5\ell-2a$ and $\Sigma_{\max}=14\ell$.

\section{Exoo's construction for \texorpdfstring{$f(5)$}{f(5)}}

Let $\TC$ be the Tutte--Coxeter graph drawn as in \cite{Exoo}: an outer Hamiltonian cycle $0,1,\dots,29$ and the fifteen chords $\{29+6t,12+6t\},\{28+6t,7+6t\},\{26+6t,3+6t\}$, $t=0,\dots,4$ (indices mod 30). Exoo replaces every vertex by $H_{15}$ with $u$ facing the chord, obtaining a 450-vertex graph that we denote $G_{450}$ (it is not named in \cite{Exoo}), and argues that no 32-cycle arises because ``any 8-cycle in Tutte--Coxeter contains at least two consecutive edges on the outer Hamiltonian cycle''.

\begin{proposition}\label{prop:exoo}
The vertices $0,17,18,5,6,23,22,1$ form an $8$-cycle of $\TC$ whose edges alternate between chords and outer edges. Consequently the graph $G_{450}$ of \cite{Exoo}, with $u$ on every chord, contains a $32$-cycle.
\end{proposition}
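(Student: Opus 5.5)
The plan is a direct verification in two steps: first check the $8$-cycle in $\TC$, then lift it to a $32$-cycle of $G_{450}$ using the Window lemma (Lemma~\ref{lem:window}).

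\emph{Step 1: the $8$-cycle.} Write out the fifteen chords explicitly by reducing the indices modulo $30$:
\begin{itemize}
\item $t=0$: $\{29,12\},\{28,7\},\{26,3\}$;
\item $t=1$: $\{5,18\},\{4,13\},\{2,9\}$;
\item $t=2$: $\{11,24\},\{10,19\},\{8,15\}$;
\item $t=3$: $\{17,0\},\{16,25\},\{14,21\}$;
\item $t=4$: $\{23,6\},\{22,1\},\{20,27\}$.
\end{itemize}
Now walk around $0,17,18,5,6,23,22,1,0$:
\begin{itemize}
\item $0$--$17$ is a chord ($t=3$);
\item $17$--$18$ is an outer edge;
\item $18$--$5$ is a chord ($t=1$);
\item $5$--$6$ is an outer edge;
\item $6$--$23$ is a chord ($t=4$);
\item $23$--$22$ is an outer edge;
\item $22$--$1$ is a chord ($t=4$);
\item $1$--$0$ is an outer edge.
\end{itemize}
The eight vertices are distinct, so this is an $8$-cycle of $\TC$. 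Its edges alternate chord/outer, so no two consecutive edges lie on the outer Hamiltonian cycle. This already contradicts the lemma quoted from \cite{Exoo}.

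\emph{Step 2: lifting to $G_{450}$.} Apply Lemma~\ref{lem:window} to this base cycle, with $\ell=8$. At each of the eight vertices, exactly one of the two cycle edges is a chord. In $G_{450}$ the attachment $u$ faces the chord, so each visit enters or leaves the gadget through $u$, and the other attachment is $v$ or $w$. All eight visits are therefore $u$-type, i.e.\ $a=8$.

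By the computed spectra, $S_{H_{15}}(u,v)=S_{H_{15}}(u,w)=\{3,\dots,14\}$. So the Minkowski sum of the eight path sets is $[24,112]$, and the lifted cycle lengths fill the interval $[8+24,\,8+112]=[32,120]$. This matches $\Sigma_{\min}=5\ell-2a=24$.

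In particular, $32$ is attained by taking a path of length $3$ in every gadget. The converse part of Lemma~\ref{lem:window} makes this combination a genuine cycle of $G_{450}$, since the gadget copies are disjoint.

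For robustness, I would also exhibit the length-$3$ paths explicitly rather than rely only on the enumerated spectrum. Recall $u=z$, $v=u_B$, $w=u_A$. The path $z,w_B,d_B,u_B$ has length $3$ from $u$ to $v$, using $zw_B$, $w_Bd$ and $du$ in copy $B$ of $H_7$. Symmetrically, $z,w_A,d_A,u_A$ has length $3$ from $u$ to $w$.

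There is no real obstacle here. The only care needed is the bookkeeping of indices modulo $30$, and confirming that at every vertex of the cycle the chord edge is the one matched to $u$. That holds by construction, since each vertex of $\TC$ has exactly one chord.
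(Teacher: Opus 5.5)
Your proposal is correct and matches the paper's proof: you verify directly that the four chords ($t=3,1,4,4$) and the four outer edges form the alternating 8-cycle, then apply Lemma~\ref{lem:window} with all eight visits of $u$-type and $3\in S_{H_{15}}(u,v)=S_{H_{15}}(u,w)$ to obtain $8+8\cdot 3=32$. Your explicit length-3 paths $z,w_B,d_B,u_B$ and $z,w_A,d_A,u_A$ remove the reliance on the enumerated spectrum; the paper additionally notes, beyond what the statement requires, that the failure persists for all $144$ Hamiltonian cycles of $\TC$.
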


\begin{proof}
The edges $\{0,17\},\{18,5\},\{6,23\},\{22,1\}$ are the chords with $t=3,1,4,4$ respectively, and $\{17,18\},\{5,6\},\{23,22\},\{1,0\}$ are outer edges; the eight vertices are distinct. The failure does not depend on the drawing: $\TC$ has exactly 144 Hamiltonian cycles, and for none of them is it true that every 8-cycle contains two consecutive edges of the Hamiltonian cycle (checked by exhaustive enumeration). In $G_{450}$ every visit of this cycle enters a copy of $H_{15}$ through $u$ (the chord side) and leaves through $v$ or $w$, and $3\in S_{H_{15}}(u,v)=S_{H_{15}}(u,w)$; by Lemma~\ref{lem:window} the length $8+8\cdot 3=32$ is attained. (A SAT search on the reconstructed graph produces such a cycle explicitly.)
\end{proof}

\begin{figure}[ht]
\centering
\begin{tikzpicture}[scale=1.15]
\foreach \i/\lab in {0/0,1/17,2/18,3/5,4/6,5/23,6/22,7/1}{
  \node[circle,draw,inner sep=1.5pt,fill=white] (n\i) at ({90-45*\i}:1.9) {\small$\lab$};
}
\foreach \i/\j/\t in {0/1/chord,1/2/outer,2/3/chord,3/4/outer,4/5/chord,5/6/outer,6/7/chord,7/0/outer}{
  \draw (n\i)--(n\j);
}
\foreach \i/\t/\ang in {0/chord/67.5,1/outer/22.5,2/chord/-22.5,3/outer/-67.5,4/chord/-112.5,5/outer/-157.5,6/chord/157.5,7/outer/112.5}{
  \node[font=\scriptsize] at ({\ang}:2.45) {\t};
}
\node[font=\small] at (0,0) {$8+8\cdot 3=32$};
\end{tikzpicture}
\caption{An 8-cycle of the Tutte--Coxeter graph, in the labelling of \cite{Exoo}, whose edges alternate between chords and outer edges. With $u$ on every chord, each of its eight gadget crossings can have length 3, producing a 32-cycle in $G_{450}$.}
\label{fig:alt}
\end{figure}

\begin{theorem}\label{thm:450}
There is an orientation of the $H_{15}$-replacement of $\TC$ that yields a cubic graph $G'_{450}$ on $450$ vertices with no cycle of length $4$, $8$, $16$ or $32$. Hence $f(5)\le 450$.
\end{theorem}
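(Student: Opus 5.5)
We will use the window lemma (Lemma~\ref{lem:window}) to reduce the question to the base graph $\TC$ plus a choice of orientation, and finish with an exact computer check. In an $H_{15}$-replacement, every vertex $x$ of $\TC$ must get a bijection telling which of its three neighbours faces the attachment $u$. Once we fix which neighbour $u$ faces at each $x$, the paper's description of $S_{H_{15}}$ shows that the images of the remaining two attachments $v,w$ do not matter. The orientation is therefore a function $\sigma\colon V(\TC)\to E(\TC)$ with $\sigma(x)\ni x$. We call a visit of a base cycle $C$ to $x$ ``$u$-type'' if $\sigma(x)$ is one of the two edges of $C$ at $x$.

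First we dispose of cycles inside a single gadget. Since $\Cyc(H_{15})=\{3,5,6,7,9,\dots,15\}$ contains no power of two, internal cycles are harmless. For a base cycle of length $\ell$ with $a$ visits of $u$-type, the paragraph after Lemma~\ref{lem:window} says the realised lengths form exactly the interval $[6\ell-2a,\;15\ell]$. Because $\TC$ has girth~8, the smallest possible value is $\ell=8$, giving $6\ell-2a\ge 48-16=32$. Hence $4,8,16$ never occur. The length $32$ occurs if and only if some $8$-cycle of $\TC$ has all eight of its visits of $u$-type, that is, $a=8$. This is exactly the failure exhibited in Proposition~\ref{prop:exoo}.

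The same formula rules out $32$ for longer base cycles. For $\ell\ge 10$ we have $6\ell-2a\ge 4\ell\ge 40>32$. Since $\TC$ is bipartite, there are no odd cycles, so $\ell=9$ does not arise. The only remaining lengths are $\ell=8$ with $a=8$, which is excluded once the orientation is chosen correctly.

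The theorem thus reduces to a combinatorial condition: choose $\sigma$ so that no $8$-cycle of $\TC$ is ``$u$-saturated''. An $8$-cycle $x_1\cdots x_8$ is $u$-saturated when $\sigma(x_i)\in\{x_{i-1}x_i,\,x_ix_{i+1}\}$ for every $i$. We enumerate the $8$-cycles of $\TC$; there are $90$ of them. For each one, $u$-saturation forces each $\sigma(x_i)$ to lie on the cycle. It therefore suffices to find $\sigma$ such that every $8$-cycle contains at least one vertex whose $u$-edge is its off-cycle third edge. This is a small SAT or hitting-set instance with $30$ vertices, $3$ choices each, and $90$ constraints. We will first try a symmetric choice, for example the orbit structure under a suitable subgroup of $\mathrm{Aut}(\TC)$, or $u$ on outer edges in a consistent rotational pattern. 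If that fails, we hand the instance to the SAT solver.

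The main obstacle is that this condition may be unsatisfiable for natural structured choices, as Exoo's chord choice shows. The counting nevertheless looks favourable: each vertex lies on $24$ of the $90$ cycles, and each constraint has $8$ vertices, each with a $1/3$ chance of being off-cycle. So a solution should exist, and a solver will find one. Having found $\sigma$, we build $G'_{450}$ explicitly and confirm it is cubic on $30\cdot 15=450$ vertices. As an independent certificate beyond the window argument, we then run a direct search for cycles of length $4,8,16,32$ and check that it is UNSAT. This yields $f(5)\le 450$.
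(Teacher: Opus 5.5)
Your argument is correct and is essentially the paper's proof: the window lemma gives realised lengths $[6\ell-2a,15\ell]$, so only a $u$-saturated base $8$-cycle can produce $32$ (an unsaturated one gives at least $8+7\cdot 3+5=34$, and $\ell\ge 10$ gives at least $40$), and the orientation is then found as a $90$-clause SAT instance and confirmed by direct cycle search. Note that your counting heuristic does not prove a solution exists, since a uniformly random orientation has on average $90\,(2/3)^8\approx 3.5$ saturated $8$-cycles; the theorem therefore rests on exhibiting a solution, which the paper lists explicitly in its appendix.
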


\begin{proof}
Choose for each vertex of $\TC$ which incident edge faces $u$ so that no 8-cycle of $\TC$ has all eight of its vertices' $u$-edges on the cycle. This is a satisfiable SAT instance with 90 clauses (one per 8-cycle); a solution is listed in the appendix. For the resulting graph, Lemma~\ref{lem:window} gives: a base 8-cycle has at least one $v$--$w$ visit, so its expansions have length at least $8+7\cdot 3+5=34$; every other base cycle has $\ell\ge 10$ ($\TC$ is bipartite of girth 8) and expands to length at least $10\cdot(1+3)=40$; and $\Cyc(H_{15})$ contains no power of two. Thus no cycle of length $4,8,16$ or $32$ exists. This was confirmed computationally: exhaustive search finds no cycle of length 4, 8 or 16, and a SAT encoding of ``a cycle of length exactly 32'' is unsatisfiable.
\end{proof}

\begin{remark}
The same reconstruction confirmed Exoo's $G_{78}$ (base: the Petersen graph with one vertex replaced by a triangle, all vertices but one triangle vertex replaced by $H_7$) and $G_{420}$ (the truncated icosahedron with every vertex replaced by $H_7$, $u$ on the edges bordering two hexagons): both contain no cycle of length 4, 8 or 16. For $G_{78}$ the orientation is not determined by the figure in \cite{Exoo}; a literal reading of the markers yields a graph with a 16-cycle, and an exhaustive search shows that exactly 16 of the $3^{11}$ orientations are valid.
\end{remark}

\section{Upper bounds for \texorpdfstring{$f(k)$}{f(k)}}

Let $n_3(g)$ denote the order of the smallest known cubic graph of girth $g$ (values are tabulated in \cite{ExooJajcay}; $n_3(17)=2176$).

\begin{theorem}\label{thm:B}
For every $k\ge 4$, $f(k)\le 15\,n_3(2^{k-2}+1)$. In particular $f(6)\le 15\cdot 2176=32\,640$.
\end{theorem}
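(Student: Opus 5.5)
The plan is to take a cubic base graph $B$ of girth $g=2^{k-2}+1$ on $n_3(g)$ vertices, replace every vertex by a copy of $H_{15}$ with an arbitrary orientation, and show via Lemma~\ref{lem:window} that the resulting cubic graph $G$, which has $15\,n_3(g)$ vertices, has no cycle of length $2^m$ for any $m\le k$. Cycles internal to a gadget have lengths in $\Cyc(H_{15})=\{3,5,6,7,9,\dots,15\}$, which contains no power of two. So only cycles projecting onto a base cycle of length $\ell\ge g$ need attention.

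For such a cycle, every gadget crossing has length at least $3$, because $\min S_{H_{15}}(p,q)=3$ for every pair of attachments. Hence by Lemma~\ref{lem:window} the length is at least $\ell+3\ell=4\ell\ge 4g=2^k+4>2^k$. Every power of two $2^m$ with $m\le k$ is at most $2^k$, so none of them occurs. The orientation is irrelevant here: the crude bound $4\ell$ already works, without needing the $v$--$w$ refinement used in Theorem~\ref{thm:450}.

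The specialisation follows directly. For $k=6$ we have $g=2^4+1=17$, and $n_3(17)=2176$ from \cite{ExooJajcay}, which gives $f(6)\le 32\,640$.

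The only point needing care is the existence of a cubic graph of girth $g$ for every $k\ge4$, so that $n_3(g)$ is defined. This follows from the known existence of cubic graphs of every girth (Erd\H{o}s--Sachs, or explicit cages and record graphs). If $n_3$ is read as the order of the smallest \emph{known} graph, the bound holds with whatever witness is used. I would also note that a base of girth at least $g$ suffices in place of girth exactly $g$. There is also a boundary check at $k=4$: there $g=5$, the base is the Petersen graph, and the bound is $150$, which is weaker than the known value $78$ but still valid.
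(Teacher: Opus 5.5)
Your proof is correct and is essentially the paper's own argument: replace every vertex of a cubic base of girth greater than $2^{k-2}$ by $H_{15}$ in any orientation, note that $\Cyc(H_{15})$ contains no power of two, and use Lemma~\ref{lem:window} to bound every projecting cycle below by $4\ell\ge 4g>2^k$. One harmless slip: $\min S_{H_{15}}(v,w)=5$, so the correct statement is $\min S_{H_{15}}(p,q)\ge 3$ for every pair of attachments, which is all the bound requires.
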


\begin{proof}
Let $B$ be cubic of girth $g>2^{k-2}$ and replace every vertex by $H_{15}$ (any orientation). By Lemma~\ref{lem:window} a cycle of the expansion either lies in a copy of $H_{15}$, whose spectrum contains no power of two, or projects onto a base cycle of length $\ell\ge g$ and has length at least $\ell+3\ell=4\ell\ge 4g>2^k$. Hence no cycle of length $2^m$ with $m\le k$ exists, and the expansion has $15|B|$ vertices.
\end{proof}

Previously the only route to a finite bound on $f(k)$ was a cubic graph of girth exceeding $2^k$, of order about $2^{(3/4)2^k}$ with the best known explicit families; Theorem~\ref{thm:B} divides the exponent by four. By \eqref{eq:reform} the conjecture is equivalent to $f(k)>2^k$ for all $k$; the bounds here are far from that threshold.

Below girth 17 the short base cycles must be handled by the orientation. For the target $64$, a base $\ell$-cycle with $a$ $u$-type visits expands to lengths at least $6\ell-2a$, so it avoids 64 iff $a\le 3\ell-33$: at most $6,9,12,15$ $u$-type visits on cycles of length $13,14,15,16$, and no condition for $\ell\ge 17$.

\begin{lemma}[Counting obstruction]\label{lem:count}
Let $B$ be cubic and, for a vertex $v$ and $\ell\in\{13,\dots,16\}$, let $c^{\ell}_{\min}(v)$ be the minimum over the three edges at $v$ of the number of $\ell$-cycles of $B$ containing that edge. If $\sum_v c^{\ell}_{\min}(v)>(3\ell-33)\,N_\ell(B)$ for some $\ell$, where $N_\ell$ is the number of $\ell$-cycles, then no orientation of the $H_{15}$-replacement of $B$ avoids $64$-cycles.
\end{lemma}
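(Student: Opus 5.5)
We argue by double counting: fix $\ell$ with $\sum_v c^{\ell}_{\min}(v)>(3\ell-33)N_\ell(B)$ and suppose, for contradiction, that some orientation of the $H_{15}$-replacement of $B$ has no $64$-cycle. An orientation assigns to each vertex $v$ an edge $e_v$ at $v$, namely the edge facing the attachment $u$. By the discussion preceding the lemma, an $\ell$-cycle $C$ of $B$ in which $a$ visits are $u$-type expands to every length in $[\,6\ell-2a,\,15\ell\,]$. Since $64\le 15\ell$ for $\ell\ge 13$, this interval contains $64$ as soon as $6\ell-2a\le 64$, that is, as soon as $a\ge 3\ell-32$. By Lemma~\ref{lem:window}, every length in the interval is realized by a cycle of $G$. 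Hence avoiding $64$ forces $a(C)\le 3\ell-33$ for every $\ell$-cycle $C$ of $B$.

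Next we identify $a(C)$. A visit of $C$ to $v$ is $u$-type exactly when one of the two cycle edges at $v$ is $e_v$. Because $C$ is a cycle, a vertex $v\in C$ lies on $e_v\in C$ if and only if $e_v$ is one of the two edges of $C$ at $v$. So $a(C)$ counts the vertices $v\in C$ with $e_v\in E(C)$.

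Now double count the pairs $(v,C)$ with $C$ an $\ell$-cycle, $v\in C$ and $e_v\in C$. Summing over cycles gives $\sum_C a(C)\le(3\ell-33)N_\ell$. Summing over vertices, the number of $\ell$-cycles through $v$ that contain $e_v$ is simply the number of $\ell$-cycles containing the edge $e_v$, since any cycle containing an edge at $v$ passes through $v$. This number is at least $c^{\ell}_{\min}(v)$. Therefore $\sum_v c^{\ell}_{\min}(v)\le(3\ell-33)N_\ell$, contradicting the hypothesis.

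The only delicate step is the first one. We must check that the threshold $3\ell-33$ really comes from the interval being full: the path sets of $H_{15}$ are intervals containing both the minima $3,5$ and the maximum $14$, so the Minkowski sum is the full interval. We must also confirm that $64$ does not lie above the upper end of that interval; this holds because $15\ell\ge 195$. The rest is routine bookkeeping.
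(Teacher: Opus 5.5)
Your proof is correct and is the paper's argument: it double counts pairs (vertex $v$, $\ell$-cycle containing the $u$-edge of $v$) to get $\sum_v c^{\ell}_{\min}(v)\le\sum_C a(C)\le(3\ell-33)N_\ell(B)$. The paper's one-line proof relies on the preceding paragraph for the bound $a(C)\le 3\ell-33$, while you derive it explicitly from the full window $[6\ell-2a,\,15\ell]$ and the converse of Lemma~\ref{lem:window}.
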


\begin{proof}
$\sum_C a(C)=\sum_v\#\{\ell\text{-cycles through the $u$-edge of }v\}\ge\sum_v c^{\ell}_{\min}(v)$, while avoiding 64 requires $a(C)\le 3\ell-33$ for every $\ell$-cycle $C$.
\end{proof}

The Tutte 12-cage is edge-transitive with 1008 twelve-cycles, so each edge lies on 64 of them and $\sum_v c_{\min}=126\cdot 64=8064>3\cdot 1008$; the Balaban 11-cage fails likewise ($3\ell-33=0$ for $\ell=11$). For girth 13, Hoare's 272-vertex Cayley graph of $\mathrm{AGL}(1,17)$ \cite{ExooJajcay}---which we reconstructed by searching all generating pairs $\{t,g,g^{-1}\}$, $t$ an involution; 544 pairs give girth 13---has 544 thirteen-cycles and every edge lies on at least 16 of them, so $\sum_v c_{\min}\ge 4352>6\cdot 544=3264$. The corresponding SAT instances are unsatisfiable, as they must be. The same search over $\mathrm{AGL}(1,p)$ for $p=23,29,31,37$ produced girth-14 Cayley graphs on $506,812,930,1332$ vertices; the first fails Lemma~\ref{lem:count} narrowly ($16\,192>15\,939$), the others pass it, and their orientation instances (which would give $f(6)\le 12\,180$ and $13\,950$) were undecided after several CPU-hours. We leave them open.

\section{Optimality of 78 and gadget censuses}

Let the gadget library consist of: leaving a vertex unreplaced (path sets $\{0\}$), replacing it by a triangle (each pair of attachments joined by paths of lengths $\{1,2\}$), $H_7$ in its three orientations, and the symmetric 7-vertex gadget $H_7'$ with all three path sets $\{2,\dots,6\}$.

\begin{proposition}\label{prop:78}
Among all designs on $C_4$-free cubic base graphs with at most $12$ vertices using this library, the minimum order of an expansion with no cycle of length $4$, $8$ or $16$ is $78$; it is attained only on Exoo's $12$-vertex base.
\end{proposition}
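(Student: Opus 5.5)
The plan is to reduce the statement, via Lemma~\ref{lem:window}, to a finite combinatorial optimisation over pairs (base graph, gadget assignment), and then to make that optimisation small enough to solve exhaustively.

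\emph{Step 1: enumerate bases.} We list all $C_4$-free cubic graphs on at most $12$ vertices, up to isomorphism, using a standard generator such as \texttt{geng}. There are only a handful: the Petersen graph, the graphs on $12$ vertices, and so on. Among them is Exoo's base, the Petersen graph with one vertex replaced by a triangle.

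\emph{Step 2: reduce validity of an expansion to base data.} Every library gadget has internal cycle spectrum contained in $\{3,5,6,7\}$, so no gadget contains a power-of-two cycle. Every path set is an interval: $\{0\}$, $\{1,2\}$, $\{2,\dots,6\}$ or $\{3,\dots,6\}$. Hence, by Lemma~\ref{lem:window}, a base cycle $C$ of length $\ell$ produces exactly the interval of cycle lengths
\[
\Bigl[\ell+\sum_i \min S_i,\ \ell+\sum_i \max S_i\Bigr],
\]
and the expansion is valid if and only if, for every cycle $C$ of $B$, this interval avoids $4$, $8$ and $16$. Base cycles have length at most $12$ and every maximum is at most $6$, so the interval lies inside $[5,84]$. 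The condition is therefore one of finitely many linear constraints on the per-visit choices: for each $C$, either $\ell+\Sigma_{\max}<T$ or $\ell+\Sigma_{\min}>T$, for each $T\in\{4,8,16\}$ lying in the range.

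\emph{Step 3: encode each base as an optimisation problem.} For each base we introduce one choice variable per vertex. Its values are: unreplaced (cost $1$), triangle (cost $3$), $H_7$ in one of three orientations (cost $7$), or $H_7'$ (cost $7$). From each choice we read off, for each pair of incident edges, the minimum and maximum of the corresponding path set. We then minimise total order subject to the constraints of Step~2, either with a pseudo-Boolean/MaxSAT solver or by iterative SAT with a cardinality bound ``order $\le 77$''. Cycles of $B$ are enumerated explicitly; on at most $12$ vertices there are few.

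Two facts prune the search. First, the $16$-constraints with disjunctive sides force many replacements. Second, an unreplaced vertex or a triangle contributes little length, so a $4$- or $8$-cycle can survive only if such vertices are sparse. To show that nothing below $78$ exists, we need UNSAT of ``order $\le 77$'' for every base. For $78$, we exhibit Exoo's design, which the earlier Remark already verified. Uniqueness of the attaining base amounts to UNSAT of ``order $\le 78$'' on every other base.

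\emph{Main obstacle.} The hard part is making the claimed exhaustiveness trustworthy rather than the arithmetic itself. This means (i) confirming that the base enumeration is complete, by cross-checking counts against known tables of $C_4$-free cubic graphs, and (ii) making sure the interval reduction correctly records which pair of attachments faces which base edge under each orientation, especially the asymmetric $v$--$w$ pair of $H_7$. For (ii), I would validate the encoding on the $16$ valid orientations of $G_{78}$ found earlier: the encoding should accept exactly those, and it should reject the literal reading of the figure.
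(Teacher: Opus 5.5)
Your proposal follows essentially the paper's route: enumerate the $C_4$-free cubic bases (3 on 10 vertices, 8 on 12), reduce validity to per-cycle interval windows via Lemma~\ref{lem:window}, and search each base exhaustively for a minimum-order valid assignment (the paper uses backtracking where you suggest PB/SAT, which is immaterial). One slip needs fixing: $C_4$-free bases can contain triangles (Exoo's base does), so windows start at $3$, not $5$, and the $T=4$ constraints must be kept---a base triangle with one triangle gadget and two unreplaced vertices gives lengths $\{4,5\}$---though dropping them would only relax the search, costing completeness but not soundness of any UNSAT result.
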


\begin{proof}
A base 4-cycle is fatal for every assignment (its window always contains 8 or 16), so bases may be taken $C_4$-free; there are 3 such cubic graphs on 10 vertices and 8 on 12 (enumerated by SAT, matching the count of three $C_4$-free cubic graphs on 10 vertices in \cite{Exoo}). For each base, a backtracking search over assignments with exact Minkowski windows (Lemma~\ref{lem:window}) against $\{4,8,16\}$ finds the minimum; the minimal design was rebuilt explicitly and verified by exhaustive cycle search.
\end{proof}

\begin{proposition}
\begin{enumerate}
\item There is no vertex gadget on $5$ or $9$ vertices without $4$- or $8$-cycles; on $7$ vertices there are exactly two, $H_7$ and $H_7'$; on $11$ vertices there is none whose attachments are pairwise at distance at least $3$.
\item Among two-attachment gadgets (spliced into edges) on $4,6,8$ vertices there are $1,4,19$ up to isomorphism, none without a $4$- or $8$-cycle; on $10$ vertices there is none with attachments at distance at least $3$.
\end{enumerate}
\end{proposition}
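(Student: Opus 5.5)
The proof will be a finite exhaustive enumeration, organised so that each count can be certified independently. The setting follows the paper's definitions. A vertex gadget is a graph on $m$ vertices with exactly three attachments of degree $2$ and all other vertices of degree $3$. Its degree sum is $3m-3$, which must be even, so $m$ is odd; this is why only $5,7,9,11$ appear. A two-attachment gadget has degree sum $3m-2$, so $m$ is even.

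For each $m$ I would set up the SAT encoding of the Method subsection with two changes. Degree constraints become exact: degree $2$ on the designated attachment vertices $\{0,1,2\}$ and degree $3$ elsewhere. I also impose the $C_4$ clauses directly and block $8$-cycles lazily (CEGAR), exactly as in the proof of Theorem~\ref{thm:A}. Soundness of UNSAT carries over verbatim, since every blocking clause is satisfied by any $8$-cycle-free gadget.

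Symmetry breaking must now respect the attachment/internal split. I would apply the lex-row constraints separately within $\{0,1,2\}$ and within the internal vertices, so that every isomorphism class still has a satisfying labelling. The cases then go as follows.

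For $m=5$ and $m=9$ I expect immediate UNSAT. $m=5$ is trivial by hand: $K_4$-like density forces a $4$-cycle.

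For $m=7$ I would enumerate all models by adding a blocking clause for each found graph's labelled edge set. The resulting graphs are reduced up to isomorphism preserving the attachment set, with a canonical-form check done by brute force over the $3!\cdot 4!$ relabellings. I then confirm that exactly $H_7$ and $H_7'$ survive, and verify their cycle spectra by direct enumeration.

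For $m=11$ I add clauses forbidding attachments at distance at most $2$. This means no edge between attachments and no common neighbour, expressed as the clauses $\neg e_{pq}$ and $\neg e_{px}\vee\neg e_{qx}$. I expect UNSAT.

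Part 2 is handled the same way with attachments $\{0,1\}$. For $m=4,6,8$ I drop the cycle constraints and enumerate all gadgets up to isomorphism, which should give the counts $1,4,19$. These small cases could be cross-checked against a direct brute-force generation of graphs with the prescribed degree sequence. Each of them is then checked to contain a $4$- or $8$-cycle. For $m=10$ I add the distance $\ge 3$ clauses ($\neg e_{01}$ and the common-neighbour clauses) and expect UNSAT.

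The main obstacle is the isomorphism bookkeeping in the enumeration cases, namely the exact counts $2$ and $1,4,19$. The UNSAT cases are routine, but here one must be sure the symmetry breaking does not discard classes and that the final reduction is correct. I would therefore run the enumeration with symmetry breaking switched off (feasible at $m\le 8$) and reduce by canonical forms, using the symmetry-broken run only as a consistency check. For the UNSAT levels I would emit DRAT certificates checked with \texttt{drat-trim}, as for Theorem~\ref{thm:A}.
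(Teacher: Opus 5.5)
Your method is the one the paper relies on. The proposition is stated without a written proof and rests on exhaustive computer search. Your encodings (exact degrees, $C_4$ clauses, lazily blocked $8$-cycles, lex constraints applied within each block, distance clauses) are sound for the UNSAT claims and for the $7$-vertex classification. Since you constrain only degrees, cycles and distances, you search a superset of the intended gadgets, so every UNSAT answer transfers. At $m=7$ no disconnected graph with the prescribed degrees is $C_4$-free, so you will indeed find exactly $H_7$ and $H_7'$.

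The step that fails as written is the count $1,4,19$. These numbers count \emph{connected} two-attachment gadgets. Neither your encoding nor your proposed brute-force cross-check over the degree sequence imposes connectivity. On $8$ vertices there are $20$ graphs with degree sequence $3^6 2^2$; the extra one is the disconnected $K_4\cup(K_4-e)$. You can confirm this by hand:
\begin{itemize}
\item If the two degree-$2$ vertices are non-adjacent, joining them gives a cubic graph on $8$ vertices with a distinguished edge. The five connected cubic graphs on $8$ vertices are the cube, the Wagner graph, two diamonds joined by a matching, and $K_{3,3}$ or the prism with one vertex replaced by a triangle. They have $1+2+3+3+5=14$ edge orbits, and $2K_4$ adds one more.
\item If the degree-$2$ vertices are adjacent, there are $5$ further graphs, all connected. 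Three arise by subdividing an edge of $K_{3,3}$ or of the prism twice, one has a pendant triangle, and in one the attachments lie on a $4$-cycle.
\end{itemize}
So your run would report $20$ at $m=8$. You must discard disconnected models before the isomorphism reduction to obtain $19$. Nothing else changes: $K_4\cup(K_4-e)$ contains a $4$-cycle, and at $m=4,6$ every graph with the prescribed degrees is already connected.
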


The second statement bears on a natural route to a counterexample: a two-attachment gadget whose path-length set $S$ satisfies $\max S<2\min S$ (for instance $S=\{5,7\}$), spliced into every edge of a base whose even cycle lengths avoid certain intervals, would give a graph with no power-of-two cycle at all. Every such gadget found so far contains a 4- or 8-cycle. We conjecture that a two-attachment gadget without internal power-of-two cycles always has $\max S\ge 2\min S$.

\section{Further closed routes}\label{sec:closed}

\begin{proposition}
Every graph with minimum degree at least $3$ on at most $19$ vertices contains a cycle of length $4$, $6$, $10$ or $12$.
\end{proposition}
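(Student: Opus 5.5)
The plan is to reuse the SAT pipeline of Theorem~\ref{thm:A} with a different set of forbidden lengths. For each $n\le 19$ I decide whether a graph on $\{0,\dots,n-1\}$ with minimum degree at least $3$ and no cycle of length $4,6,10$ or $12$ exists, and expect UNSAT at every level. The encoding keeps clauses (1)--(3) unchanged: the sequential-counter degree constraints, the explicit $4$-cycle clauses, and the lexicographic row-ordering symmetry breaking. The $6$-cycles are few enough to be forbidden eagerly, one clause per cyclic arrangement of each $6$-subset, roughly $\binom{19}{6}\cdot 60\approx 1.6\cdot 10^6$ clauses at $n=19$. The lengths $10$ and $12$ are handled lazily as in item~(4): for each model returned, enumerate up to a few hundred of its $10$- and $12$-cycles, block each by the clause over its edges, and resume the solver incrementally.

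Soundness of an UNSAT answer follows exactly as in the proof of Theorem~\ref{thm:A}. Every graph in the target class satisfies the degree, $4$- and $6$-cycle clauses by definition. It also satisfies every blocking clause, since such a clause forbids one specific $10$- or $12$-cycle. Its lex-maximal labelling satisfies (3). Hence UNSAT at level $n$ proves that no graph in the class has $n$ vertices. Two structural facts can be added as redundant but sound strengthenings. First, the forbidden lengths $4$ and $6$ force girth at least $5$ on even cycles. Second, a minimum-degree-$3$ graph with no $4$- or $6$-cycle needs fairly many vertices, which makes the small levels immediate. For $n$ below roughly $14$ I expect the instance to be UNSAT before any lazy clause is added, mirroring the first rows of the table.

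For certification I would rerun the final incremental instance at each level, with all accumulated blocking clauses added as ordinary clauses, non-incrementally with DRAT output, and check the proof with \texttt{drat-trim}, as the abstract promises. I would also validate the pipeline by checking that it finds witnesses when one forbidden length is dropped. A natural test is the Petersen graph (girth $5$, cycle lengths $5,6,8,9$), which should be rejected because it has $6$-cycles. Any satisfying witness found in such test runs is checked with the independent cycle-enumeration routine.

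The main obstacle is the lazy phase at $n=18,19$. Graphs of girth at least $5$ with minimum degree $3$ on $19$ vertices typically have very many $10$- and $12$-cycles, so the CEGAR loop may need far more rounds than in Table~1. My first remedy is to enumerate cycles of both lengths in each model and to block minimal subsets: several short cycles sharing edges can be killed by one clause on a small theta-subgraph. If that is still too slow, I would add the Carr/Markstr\"om-type reductions only as an independent cross-check, not in the certified run, because those reductions are not justified for this mixed set of forbidden lengths.
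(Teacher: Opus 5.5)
This is essentially the paper's proof: the Theorem~\ref{thm:A} SAT/CEGAR search is rerun at each $n\le 19$ with $6$-, $10$- and $12$-cycles also excluded, and UNSAT is sound for exactly the same reasons; encoding the $6$-cycles eagerly, where the paper blocks them lazily like the $10$- and $12$-cycles, is an immaterial implementation choice. One caution about your side remarks: triangles and $5$-cycles are permitted in the target class, so turning the ``girth at least $5$'' comment into a triangle-excluding constraint would be unsound, and a clause blocking a theta subgraph is weaker than, not a substitute for, the clauses blocking its individual cycles.
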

This is the same search as Theorem~\ref{thm:A} with 6-, 10- and 12-cycles blocked lazily; each level is unsatisfiable within seconds. It shows that a base for the $\{5,7\}$-gadget route above would need at least 20 vertices.

\begin{remark}[Congruence obstructions]
No graph of minimum degree at least 3 has all cycle lengths divisible by an integer $m\ge 3$. Indeed, take a deepest vertex $v$ of a depth-first search tree; its two non-tree neighbours $x,y$ are ancestors, and with $A=d(x,v)$, $B=d(x,y)$ along the tree one finds cycles of lengths $A+1$, $A+B+1$, $B+2$. If $m$ divides the first two it divides $B$, and then it does not divide $B+2$. This folklore argument rules out the ``all lengths $\equiv 0 \pmod m$'' family of would-be counterexamples for every $m$.
\end{remark}

\section{Data and verification}\label{sec:data}

All graphs are provided in graph6 format: the corrected $G'_{450}$, the reconstructed $G_{78}$ and $G_{420}$, Hoare's girth-13 graph, and the girth-14 Cayley graphs. Cycle-length claims were checked by two independent methods: a rooted depth-first search with distance pruning (exhaustive; used for lengths up to 16), and a SAT encoding of ``there is a cycle of length exactly $L$'' (used for lengths 32 and above; unsatisfiability certifies absence). The depth-first routine was validated against brute-force enumeration on graphs with known spectra (Petersen, Heawood, dodecahedron, random cubic graphs). Every level $n=4,\dots,23$ of Theorem~\ref{thm:A} was additionally certified independently of the incremental search and of the Python wrapper: the static formula consisting of the base encoding together with all blocked 8-cycles of that level was solved once by a standalone CaDiCaL~2.1.3 with DRAT proof logging, and the proof was checked with \texttt{drat-trim}. All twenty proofs were accepted (\texttt{s VERIFIED}); the largest, for $n=23$, has $800\,757$ clauses and a 3.1~GB proof, checked in 74 minutes. All SAT instances of Section~\ref{sec:closed} can be re-run from the accompanying scripts; the ladder produces a checkpoint file of blocked cycles from which any level's certificate is regenerated.

\appendix
\section{The repaired orientation of \texorpdfstring{$G'_{450}$}{G'450}}

With $\TC$ labelled as in Section~4, the following table gives, for each vertex $x$, the neighbour $y$ such that the edge $xy$ is attached to $u$ of the copy of $H_{15}$ replacing $x$; the other two edges are attached to $v$ and $w$ in either order (the two choices give isomorphic graphs, since $H_{15}$ has an automorphism exchanging $v$ and $w$).

\begin{center}\small
\begin{tabular}{rr@{\qquad}rr@{\qquad}rr@{\qquad}rr@{\qquad}rr@{\qquad}rr}
\toprule
$x$&$y$&$x$&$y$&$x$&$y$&$x$&$y$&$x$&$y$&$x$&$y$\\
\midrule
0&1 & 5&4 & 10&9 & 15&8 & 20&19 & 25&16\\
1&0 & 6&5 & 11&10 & 16&15 & 21&14 & 26&3\\
2&1 & 7&6 & 12&11 & 17&0 & 22&1 & 27&20\\
3&2 & 8&7 & 13&4 & 18&19 & 23&6 & 28&7\\
4&3 & 9&2 & 14&13 & 19&20 & 24&11 & 29&28\\
\bottomrule
\end{tabular}
\end{center}
Twelve of the thirty $u$-edges are chords and eighteen are outer edges; every one of the 90 eight-cycles of $\TC$ contains a vertex whose $u$-edge is off the cycle.

\section*{Acknowledgments}
The author thanks Geoffrey Exoo for his encouraging correspondence about the correction in Section~4. The data and certificates accompanying this paper are archived at \url{https://doi.org/10.5281/zenodo.22180583}.

\end{document}